\documentclass[12pt,a4paper]{article}
 
\usepackage[T1]{fontenc}
\usepackage[utf8]{inputenc}
\usepackage{fullpage}
\usepackage{amsfonts,amsmath,amssymb,amsthm,mathtools,dsfont,
  xcolor,hyperref}
\usepackage[shortlabels]{enumitem}
\usepackage{authblk}

\hypersetup{colorlinks,linkcolor={red!50!black},
	citecolor={blue!50!black}}

\newtheorem{theorem}{Theorem}[section]
\newtheorem{lemma}[theorem]{Lemma}

\theoremstyle{definition}

\theoremstyle{remark}
\newtheorem{remark}[theorem]{Remark}

\newcommand{\eps}{\varepsilon}
\newcommand{\one}{\mathds 1}
\renewcommand{\P}{\mathds P}
\newcommand{\E}{\mathds E}
\newcommand{\R}{\mathds R}
\newcommand{\sR}{\mathcal R}
\newcommand{\N}{\mathds N}
\renewcommand{\O}{\mathcal O}
\newcommand{\hs}{\hspace{1pt}}
\newcommand{\e}{\mathrm e}
\renewcommand{\d}{\,\mathrm d}
\newcommand{\B}{\mathcal B}
\DeclareMathOperator{\Var}{Var}

\newcommand\smallO{
  \mathchoice
  {{\scriptstyle\mathcal{O}}}
  {{\scriptstyle\mathcal{O}}}
  {{\scriptscriptstyle\mathcal{O}}}
  {\scalebox{.7}{$\scriptscriptstyle\mathcal{O}$}}}

\numberwithin{equation}{section}
\usepackage[most]{tcolorbox} % for colored boxes

\newtcolorbox{query}{
  colback=white,         % white background
  colframe=black,         % black border
  boxrule=0.8pt,          % thickness of the border
  arc=0pt,                % square corners
  left=4pt, right=4pt, top=4pt, bottom=4pt, % padding
}

\title{Pareto points in growing dimensions}

\author[1,2]{Andrii Ilienko\thanks{The first author is supported by the Swiss National Science
  Foundation under grant no. 229505. \\ \hspace*{\parindent}\phantom{$\ast$} Email: \href{mailto: andrii.ilienko@unibe.ch}{\nolinkurl{andrii.ilienko@unibe.ch}.}}}
\author[1]{Bochen Jin\thanks{Email: \href{mailto:bochen.jin@unibe.ch}{\nolinkurl{bochen.jin@unibe.ch}.}}}

\affil[1]{Institute of Mathematical Statistics and Actuarial Science,\newline
  University of Bern, Switzerland}
\affil[2]{Igor Sikorsky Kyiv Polytechnic Institute, Ukraine}

\date{}

\begin{document}
\maketitle

\begin{abstract}
  We consider $n$ independent random points uniformly distributed in
  the $d_n$-dimensional unit cube and study Pareto points, that is,
  points that do not coordinatewise dominate any other point. We
  identify the critical growth rate of $d_n$ at which a phase transition
  occurs: below this threshold, the number of non-Pareto points
  diverges in probability, whereas above it there are asymptotically
  no such points. At criticality, the number of non-Pareto points
  converges in distribution to a Poisson random variable. We further
  describe their asymptotic spatial distribution in terms of
  convergence of random point measures.
  
  We also investigate points that dominate exactly $r$ other points and
  establish analogous phase transitions. For $r=1$, the critical
  dimension is the same as for non-Pareto points, whereas for every
  fixed $r\geq 2$ it is different, but, surprisingly, common to all
  such $r$.

  Keywords: Pareto points; phase transition; Poisson approximation; point processes.

  MSC 2020: Primary 60D05; Secondary 60F05, 60G55.
\end{abstract}
\section{Introduction and overview}

Let $X_i$, $i=1,\ldots,n$, be distinct points in $\R^d$ with coordinates $X_i^{k}$, $k=1,\ldots,d$. We say that $X_i$ dominates $X_j$ and write $X_i\succeq X_j$ if $X_i^{k}\ge X_j^{k}$ for all $k$; $X_i\preceq X_j$ stands for the reverse relation. A point $X_i$ is called Pareto-minimal (or simply Pareto) if it does not dominate any other point, i.e.\ if there is no $j\ne i$ such that $X_i\succeq X_j$.
Pareto points represent the set of efficient trade-offs in multiobjective optimization and, more broadly, provide a basic model for extracting extremal frontiers from multivariate data. From a geometric viewpoint, Pareto points constitute the lower boundary of the point cloud in the direction $(-1,\ldots,-1)$ with respect to the coordinatewise order.

We study Pareto points among i.i.d.\ random points $X_1,\ldots,X_n$ drawn uniformly from the $d$-dimensional unit cube. Note that if the coordinates of each point are independent with continuous marginal distributions, the model can be reduced to the uniform case by applying the coordinatewise quantile transform. Since the seminal work \cite{BS66}, Pareto points of random samples have been extensively studied. Note that in \cite{BS66} and in the other works cited below, the authors consider Pareto-maximal points, i.e., points not dominated by any other point. For convenience, we work with the dual notion. Clearly, the two formulations are equivalent from a probabilistic viewpoint.

Let $K_{n,d}$ denote the number of Pareto points in a sample of size $n$ drawn uniformly from $[0,1]^d$. It follows from the results in \cite{BS66} and \cite{BCHL98} that, as $n\to\infty$ with fixed $d\ge2$,
\begin{gather}
	\label{eq:E}
	\E K_{n,d}\sim\frac1{(d-1)!}(\log n)^{d-1},\\
	\Var K_{n,d}\sim\biggl(\frac1{(d-1)!}+c_d\biggr)(\log n)^{d-1}.
	\label{eq:Var}
\end{gather}
The constants $c_d$ admit an explicit triple-series representation. Moreover, the first few constants are simple multiples of values of the Riemann zeta function: $c_2=0$, $c_3=\zeta(2)$, $c_4=2\zeta(3)$, $c_5=\frac{33}{16}\zeta(4)$.
In particular, \eqref{eq:E} and \eqref{eq:Var} immediately imply a law
of large numbers for $K_{n,d}$. A quantitative central limit theorem
for $K_{n,d}$ was established in \cite{BDHT05} for independent points
$X_i$, and in \cite{BM22} for samples drawn from a homogeneous Poisson
point process. For related CLTs in more general settings, see also
\cite{B00,BX01}.

Equation \eqref{eq:E} shows that the order of growth of $\E K_{n,d}$ in $n$ increases with the dimension. This is in line with intuition: adding dimensions makes dominance more restrictive, so fewer points dominate others and more become Pareto. This naturally suggests letting the dimension $d=d_n$ grow with $n$ in the hope of revealing new phenomena. To the best of our knowledge, relatively little is known about Pareto points in this regime: first-order asymptotics for $\E K_{n,d_n}$ were obtained in \cite{H04}, and, more recently, \cite{JZ23} studied a phase transition for the probability $p_{n,d_n}$ that a point (say, $X_1$) is Pareto. Specifically, it is shown in \cite{JZ23} that if $d_n$ grows at a slower (resp.\ faster) rate than $\log n$, then $p_{n,d_n}\to0$ (resp.\ $p_{n,d_n}\to1$) as $n\to\infty$.

We begin by considering two extreme regimes. If $d_n=1$, then there is clearly only one Pareto point, namely the minimum of the sample, so that the number of non-Pareto points is $n-K_{n,1}=n-1\to\infty$. On the other hand, say for $d_n=n$,
\[\{n-K_{n,n}>0\}\subseteq\bigcup_{1\le i\ne j\le n}\{X_i\preceq X_j\},\]
and hence
\[\P\{n-K_{n,n}>0\}\le n(n-1)\P\{X_1\preceq X_2\}=\frac{n(n-1)}{2^n}\to0.\]
Thus, $n-K_{n,n}\xrightarrow{p}0$. These examples suggest that, in some intermediate growth regime of $d_n$, the number of non-Pareto points should converge to a (random) finite limit. Our first result, Theorem \ref{thm:1}, shows in particular that this phase transition occurs when \[d_n=d_n^{\hs\star}\vcentcolon=\beta\log n+\O(1)\]
with $\beta=\frac2{\log2}\approx2.885$, and that the limiting law is Poisson. Note that this critical dimension is greater than the $\log n$ threshold for $p_{n,d_n}$ in \cite{JZ23}. This is not particularly surprising: even if each point is Pareto with probability tending to $1$, the increasing sample size may still produce an unbounded number of non-Pareto points.

The Poisson nature of the limit allows for a finer description: it enables us to study not only the number of non-Pareto points in this regime, but also their spatial distribution. In Theorem \ref{thm:2} we show that the random point measures obtained by projecting the non-Pareto points onto any fixed finite-dimensional coordinate subspace converge in distribution to an inhomogeneous Poisson point measure on that subspace.

We then turn to the finer structure of the non-Pareto points. For $r=1,\ldots,n-1$, let $K_{n,d}^{(r)}$ denote the number of points among $X_1,\ldots,X_n$ that dominate exactly $r$ others. In particular, $n-\sum_{r=1}^{n-1}K_{n,d}^{(r)}=K_{n,d}$ is the number of Pareto points.
The asymptotic behavior of $K_{n,d}^{(r)}$ for fixed $d$ has also been studied in the literature, where it corresponds to the $(r+1)$st layer. For instance, \cite{BS66} shows that if $r$ is fixed as well, then $\E K_{n,d}^{(r)}$ shares the same asymptotics \eqref{eq:E} as $\E K_{n,d}$.
It follows from Theorem \ref{thm:1} that, in the critical regime $d_n=d_n^{\hs\star}$, we have
\[\sum_{r=2}^{n-1} K_{n,d_n}^{(r)}\xrightarrow{p}0,\]
that is, asymptotically all points dominate at most one other point. It would be natural to conjecture that there exists a family of threshold sequences $\bigl(d_n^{(r)}\bigr)_{n\in\N}$, decreasing in $r$, with $d_n^{(1)}=d_n^{\hs\star}=\beta\log n+\O(1)$, such that $K_{n,d_n}^{(r)}\xrightarrow{p}\infty$ (resp.\ $\xrightarrow{p}0$) whenever $d_n$ grows at a slower (resp.\ faster) rate than $d_n^{(r)}$. Surprisingly, this is not the case.

It turns out that all $r\ge2$ share the same threshold:
\begin{equation}
	\label{eq:elog}
	d_n^{(r)}=d_n^{\hs\star\hspace{-0.5pt}\star}\vcentcolon=\e\log n-\frac12\log\log n+\O(1).
\end{equation}
Proving a Poisson limit for $K_{n,d_n}^{(r)}$ in this regime appears to be a highly nontrivial problem. In particular, the Stein--Chen Poisson approximation used in the proofs of Theorems \ref{thm:1} and \ref{thm:2} breaks down here due to substantially stronger dependencies. Accordingly, we leave questions concerning the limiting distribution --- let alone the convergence of the associated projected point measures --- beyond the scope of this paper. Instead, in Theorem~3 we establish the universality of the threshold \eqref{eq:elog} at the level of expectations. More precisely, for every $r\ge2$ we show that $\E K_{n,d_n}^{(r)}\to\infty$ or $\to0$ depending on whether $d_n$ grows slower or faster than $d_n^{\hs\star\hspace{-0.5pt}\star}$, and identify the limit of $\E K_{n,d_n}^{(r)}$ in the critical regime.

Finally, we note that the study of Pareto points in growing dimensions is of interest not only in its own right, but also from the perspective of stochastic geometry. A classical object in this area is the convex hull of i.i.d.\ random points. Pareto points, which form the lower boundary of the sample, are closely related to the vertices of the convex hull, although the latter are more difficult to analyze. This connection also arises at the algorithmic level; see \cite{BKST78,D80}. Recently, there has been progress on convex hulls of i.i.d.\ random points in high dimensions, in particular for the uniform model (and, more generally, for samples from multivariate beta and beta-prime distributions). In this growing-dimensional regime, threshold phenomena have been identified, and asymptotic formulas derived for the expectations of the volume, intrinsic volumes, and the number of vertices; see, e.g., \cite{BCGTT19,BKT21}. As we demonstrate in this paper, much finer results can be obtained for Pareto points. It is plausible that similar phenomena may eventually be established for the vertices of convex hulls as well.

\section{Main results}

We recall the notation. For i.i.d.\ points $X_i$, $i=1,\ldots,n$, distributed uniformly in $[0,1]^d$, let $K_{n,d}$ denote the number of Pareto points among them, and $K_{n,d}^{(r)}$, $r=1,\ldots,n-1$, the number of points that dominate exactly $r$ others. In particular,
$K_{n,d}+\sum_{r=1}^{n-1}K_{n,d}^{(r)}=n$.
Our first result establishes the critical dimension and the structure of the phase transition for $n-K_{n,d}$, the number of non-Pareto points.

\begin{theorem}\
  \label{thm:1}
  \begin{enumerate}[(i)]
  \item If $d_n-\frac 2{\log 2}\log n\to-\infty$, then $n-K_{n,d_n}\xrightarrow p\infty$.
  \item If $d_n-\frac 2{\log 2}\log n\to c\in\R$, then $n-K_{n,d_n}\xrightarrow dZ_c\sim\mathsf{Pois}(2^{-c})$, where $\mathsf{Pois}$ stands for the Poisson distribution. Moreover, in this case,
    \[\E(n-K_{n,d_n})\to\E Z_c=2^{-c},\qquad\Var(n-K_{n,d_n})\to\Var Z_c=2^{-c},\qquad\sum_{r=2}^{n-1} K_{n,d_n}^{(r)}\xrightarrow{p}0.\]
  \item If $d_n-\frac 2{\log 2}\log n\to\infty$, then $n-K_{n,d_n}\xrightarrow p0$.
  \end{enumerate}
\end{theorem}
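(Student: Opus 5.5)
Throughout, $n-K_{n,d}$ is sandwiched between two pair counts. Let
\[
W_n=\sum_{i<j}\one\{X_i\preceq X_j \text{ or } X_j\preceq X_i\}
\]
be the number of comparable (ordered) pairs. Every non-Pareto point $X_i$ has at least one $j$ with $X_j\preceq X_i$, so $n-K_{n,d}\le W_n$. Each comparable pair produces at least one non-Pareto point (the larger one). Moreover, distinct pairs share a larger point only if some point dominates at least two others. Hence
\[
W_n-\sum_{r\ge2}(r-1)K^{(r)}_{n,d}\le n-K_{n,d}\le W_n .
\]

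\textbf{First moments.} Since $\P\{X_1\preceq X_2\}=2^{-d}$, we get $\E W_n=\binom n2 2^{1-d}\sim n^2 2^{-d}$. When $d_n-\beta\log n\to c$ with $\beta=2/\log 2$, we have $n^2=2^{\beta\log n}$, so $\E W_n\to2^{-c}$. To control the correction term I bound the number of "bad" configurations: triples $(i,j,k)$ with $X_j,X_k\preceq X_i$, and chains $X_k\preceq X_j\preceq X_i$. For each fixed coordinate, $\P\{U_j,U_k\le U_i\}=1/3$ and $\P\{U_k\le U_j\le U_i\}=1/6$. So the expected number of such triples is $O(n^3 3^{-d})$. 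With $d\approx\beta\log n$ this is $n^{3-2\log 3/\log 2}=n^{-0.17\ldots}\to0$. Note that $\sum_{r\ge2}(r-1)K^{(r)}\le \#\{\text{triples }(i,j,k):X_j,X_k\preceq X_i\}$. This gives $\sum_{r\ge2}K^{(r)}\xrightarrow p0$ and $W_n-(n-K_{n,d_n})\xrightarrow p 0$ in the critical regime, and trivially in regime (iii). For (iii), Markov's inequality gives $\P\{n-K>0\}\le\E W_n\to0$.

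\textbf{Poisson limit (ii).} I apply the Stein--Chen method, in its dissociated-sum form (Barbour--Holst--Janson), to $W_n=\sum_{\{i,j\}}I_{ij}$. Here $I_{ij}$ is independent of all $I_{kl}$ with $\{k,l\}\cap\{i,j\}=\emptyset$. The total variation bound is of order
\[
\sum_\alpha p_\alpha^2+\sum_\alpha\sum_{\beta\sim\alpha,\beta\neq\alpha}\bigl(p_\alpha p_\beta+\E I_\alpha I_\beta\bigr).
\]
The first two sums are $O(n^3 4^{-d})\to0$. The term $\E I_\alpha I_\beta$ for pairs sharing one index involves three points in one of the triple configurations above. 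Its probability per coordinate is $1/3$ for the configurations $\vee$/$\wedge$ and $1/6$ for chains. This gives a total of $O(n^3 3^{-d})\to0$. Hence $W_n\to\mathsf{Pois}(2^{-c})$ in total variation, and therefore so does $n-K_{n,d_n}$.

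For the moments, I use $\E(n-K)=\E W_n-O(n^3 3^{-d})\to2^{-c}$. For the variance I bound $\E W_n^2$ directly. Its diagonal part gives $\E W_n$, the disjoint pairs give $(\E W_n)^2(1+o(1))$, and the overlapping pairs give $o(1)$ by the triple bound. Since $0\le W_n-(n-K)\le T_n$, where $T_n$ is the triple count, I also need $\E T_n^2\to0$ and $\E W_nT_n\to0$. The main obstacle is this second-moment bound on triples. It requires enumerating configurations of up to six points with overlaps. The worst cases are four- or five-point connected configurations, such as $X_j,X_k,X_l\preceq X_i$ with per-coordinate probability $1/4$, giving $n^4 4^{-d}=O(1)\cdot n^{0}$. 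I need to check each such case carefully; the star gives $n^4 4^{-d}\to 2^{-2c}\cdot$const, which is not small. To avoid this, I would instead compare $n-K$ with $W_n$ in $L^2$ via $(W_n-(n-K))^2\le T_n^2$, but use $\sum_{r\ge2}(r-1)K^{(r)}$ directly. For a star with $m$ leaves the per-coordinate probability is $1/(m+1)$, so $\E\binom{D_i}{2}$-type moments are $n^{m+1}(m+1)^{-d}\to0$ for $m=2$. I would verify these by computing $\E\bigl(\sum_i\binom{D_i}{2}\bigr)^2$ configuration by configuration.

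\textbf{Regime (i).} Here $\E W_n\to\infty$ and I need $n-K\xrightarrow p\infty$. Since $d_n$ may be small, triples need not vanish, so I use a monotonicity/thinning argument instead. Choose $m=m_n\le n$ with $d_n-\beta\log m\to c$ for any fixed $c$; this is possible since $\beta\log n-d_n\to\infty$, at least when $d_n\to\infty$. Split the sample into $\lfloor n/m\rfloor\to\infty$ independent blocks of size $m$. The number of non-Pareto points overall is at least the number of points non-Pareto within their own block, since domination within a subsample implies domination in the whole sample. By (ii), each block contributes a $\mathsf{Pois}(2^{-c})$ limit. By the law of large numbers over blocks, the total is at least roughly $(n/m)(1-\e^{-2^{-c}})\to\infty$. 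For bounded $d_n$, I take blocks of bounded size, where each block has a non-Pareto point with probability bounded away from $0$; the same argument applies.
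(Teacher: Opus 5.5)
\textbf{What works.} Your treatment of (iii) is correct. So is the Poisson limit, $\E(n-K_{n,d_n})\to2^{-c}$, and $\sum_{r\ge2}K^{(r)}_{n,d_n}\xrightarrow{p}0$. In places you take a different route from the paper:
\begin{enumerate}[(a)]
\item You run Stein--Chen directly on $W_n$, which equals the paper's $S_{n,d_n}([0,1]^m)$. You then transfer the limit via $\P\{W_n\neq n-K_{n,d_n}\}\le\E T_n=n\binom{n-1}{2}3^{-d_n}\to0$, instead of using void probabilities.
\item For (i) you use monotonicity in the sample size (block splitting) rather than the paper's monotonicity in the dimension. The paper's coupling avoids your separate treatment of bounded $d_n$, but your version also works.
\end{enumerate}

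\textbf{The gap: the variance.} You leave $\Var(n-K_{n,d_n})\to2^{-c}$ open, and the route you sketch cannot succeed, because the quantities you propose to show vanish do not vanish. Take $T_n=\sum_i\binom{D_i}{2}$, with $D_i$ the number of points dominated by $X_i$. The identities $D\binom D2=3\binom D3+2\binom D2$ and $\binom D2^2=\binom D2+6\binom D3+6\binom D4$ give
\[
\E W_nT_n\ge3n\binom{n-1}{3}4^{-d_n}\to\tfrac12\,2^{-2c},\qquad
\E T_n^2\ge6n\binom{n-1}{4}5^{-d_n}\asymp n^{5-2\log_2 5}\to\infty.
\]
In general, $k$-leaf stars contribute $n^{k+1}(k+1)^{-d_n}\asymp n^{k+1-2\log_2(k+1)}$. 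This is $\Theta(1)$ for $k=3$ and diverges for $k\ge4$, a heavy-tail effect of $|X_i|$. So computing $\E\bigl(\sum_i\binom{D_i}{2}\bigr)^2$ configuration by configuration will fail, and so will bounding $(W_n-(n-K_{n,d_n}))^2$ by $T_n^2$.

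\textbf{How to close it.} The missing step is short within your own framework. You already have $\E W_n^2\to2^{-c}+2^{-2c}$ and $0\le n-K_{n,d_n}\le W_n$, hence $\limsup\E(n-K_{n,d_n})^2\le2^{-c}+2^{-2c}$. Conversely, $n-K_{n,d_n}\xrightarrow{d}Z_c$, and Fatou's lemma (e.g.\ via Skorokhod's representation) gives $\liminf\E(n-K_{n,d_n})^2\ge\E Z_c^2=2^{-c}+2^{-2c}$. Combined with the convergence of the mean, this yields $\Var(n-K_{n,d_n})\to2^{-c}$.

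There are two alternatives. You could bound $E_n=\sum_i(D_i-1)_+$ directly, using the pointwise bound $(D-1)_+^2\le2\binom D2$. Or you could follow the paper, which computes the second factorial moment $n(n-1)\P\{X_1,X_2\text{ both non-Pareto}\}$ and so only integrates bounded indicators.
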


Since $d_n$ is integer-valued, the condition in (ii) cannot hold
along all $n$ for any fixed $c\in\R$. Therefore, this and all later results
are understood in the subsequential sense, that is, along subsequences
for which the corresponding assumptions are satisfied. We shall not
mention this explicitly thereafter.

To formulate the next theorem, recall that the vague topology on the space of locally finite measures on a Polish space $S$ is generated by the integration maps 
$\nu\mapsto\int_S f\d\nu$ for all bounded continuous functions 
$f$ with bounded support; see, e.g., Section 3.4 in \cite{R87} 
or Chapter 4 in \cite{K17} for a general exposition. 
Denote by $\xrightarrow{\,vd\,}$ the distributional convergence 
of random point measures with respect to the vague topology.

Fix $m\in\N$ and integers $1\le k_1<\cdots<k_m$. For each $n$ with $d_n\ge k_m$, let $\pi_m\colon\R^{d_n}\to\R^m$ denote the canonical projection onto the subspace spanned by the $k_1$th,\ldots,$k_m$th coordinate axes:
\begin{equation}
\label{eq:pi}
\pi_m\bigl(x^{1},\ldots,x^{d_n}\bigr)
=\bigl(x^{k_1},\ldots,x^{k_m}\bigr).
\end{equation}
Throughout, we write $\pi_m$ without indicating the dependence on $k_1,\ldots,k_m$, since only the number $m$ of selected coordinates will matter. For $n$ with $d_n\ge k_m$, consider the random point measure $\Xi_n^{m\vphantom{\int}}$ on $[0,1]^m$ given by
\[\Xi_n^{m\vphantom{\int}}=\sum_{i=1}^n \delta_{\pi_m X_i}
\one\{X_i\ \text{is non-Pareto}\},\]
where $\delta_a$ stands for the Dirac measure at $a$. In fact, $\Xi_n^{m\vphantom{\int}}$ is the pushforward of the point measure of all non-Pareto points under the canonical projection $\pi_m$.

For $x\in\R^m$, let $|x|$ denote the product of its coordinates. The next result establishes the limiting spatial distribution of the projections of non-Pareto points in the critical regime.

\begin{theorem}
	\label{thm:2}
	Let $m\ge1$. If $d_n-\frac 2{\log 2}\log n\to c\in\R$, then $\Xi_n^{m\vphantom{\int}}\xrightarrow{\,vd\,}\Xi^{m\vphantom{\int}}$, where $\Xi^{m\vphantom{\int}}$ is a Poisson point measure on $[0,1]^m$ with intensity $\lambda^m$ given by
	\[\d\lambda^m=2^{m-c}|x|\d x.\]
\end{theorem}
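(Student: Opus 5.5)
To prove Theorem \ref{thm:2}, I plan to use Kallenberg's criterion for simple point processes. Since the limit $\Xi^m$ is a simple Poisson measure with diffuse intensity, it suffices to verify two things for every finite union $B$ of half-open boxes in $[0,1]^m$:
\[
\E\,\Xi_n^m(B)\to\lambda^m(B),
\qquad
\P\{\Xi_n^m(B)=0\}\to \e^{-\lambda^m(B)} .
\]
This yields $\Xi_n^m\xrightarrow{\,vd\,}\Xi^m$. Alternatively, one can bound the total variation distance between $\Xi_n^m$ restricted to $B$ and a Poisson process via the Stein--Chen method for processes (Barbour--Brown). I expect the dependency structure to be the same one that already gives the Poisson limit in Theorem \ref{thm:1}(ii), so it should suffice to redo that argument with indicators localized to $B$.

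\textbf{Step 1 (intensity).} Write $\Xi_n^m(B)=\sum_i \one\{\pi_mX_i\in B,\ X_i\text{ non-Pareto}\}$. By Theorem \ref{thm:1}(ii), with high probability every non-Pareto point dominates exactly one other point. So up to an error of vanishing expectation, it suffices to count ordered pairs $(i,j)$ with $X_j\preceq X_i$ and $\pi_mX_i\in B$. The expected number of such pairs is
\[
n(n-1)\int_{B} |x| \d x\cdot 2^{-(d_n-m)},
\]
since conditionally on $X_i=x$ the probability that $X_j\preceq X_i$ is $\prod_k x^k$. The $m$ selected coordinates contribute $|\pi_m x|$, and the remaining $d_n-m$ coordinates integrate to $2^{-(d_n-m)}$. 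Because $n^2 2^{-d_n}\to 2^{-c}$, this quantity tends to $2^{m-c}\int_B|x|\d x=\lambda^m(B)$. For the error term, I will bound the expected number of points dominating at least two others by $n^3\,\E\bigl[|X_1|^2\bigr]=n^3 3^{-d_n}\to0$, since $3^{-d_n}n^3=n^{3-2\log3/\log2}\cdot\O(1)\to0$.

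\textbf{Step 2 (avoidance probabilities).} I will apply Stein--Chen to the pair indicators
\[
I_{ij}=\one\{X_j\preceq X_i,\ \pi_mX_i\in B\}, \qquad i\ne j.
\]
I take the dependency neighbourhood of $(i,j)$ to be the pairs sharing an index with it. The pairs are locally dependent in this sense, so the Arratia--Goldstein--Gordon bound applies with $b_3=0$. The term $b_1$ is of order $n^3 (2^{-d_n})^2\to0$. The term $b_2$ collects the probabilities of configurations of three points in which two domination relations hold. These are the chains $X_k\preceq X_j\preceq X_i$, which have probability $6^{-d_n}$, together with the forks $X_j,X_k\preceq X_i$ and $X_i,X_k\succeq X_j$, which have probability $3^{-d_n}$ each. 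Multiplied by the $n^3$ index choices, these terms vanish since $3^{-d_n}n^3\to0$. Hence $\sum I_{ij}$ is asymptotically $\mathsf{Pois}(\lambda^m(B))$. Combined with the reduction in Step 1, which shows that $\Xi_n^m(B)$ differs from $\sum I_{ij}$ with vanishing probability, this gives the avoidance limit.

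\textbf{Main obstacle.} The main obstacle is bookkeeping rather than substance. One point to handle is that $\Xi_n^m$ counts points, not pairs; a point dominating two others is counted once in $\Xi_n^m$ but twice in the pair sum. Another is checking that the restriction to $B$, which only affects the $m$ fixed coordinates, does not spoil the $b_2$ estimates. Since $\one_B\le1$, the unrestricted bounds from Theorem \ref{thm:1} dominate, so this should be immediate. Finally, I will note that $\Xi^m$ has no atoms and the boundaries of boxes are $\lambda^m$-null, which legitimizes the use of Kallenberg's theorem.
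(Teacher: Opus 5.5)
Your proposal is correct and follows the paper's proof essentially step for step. The common ingredients are Kallenberg's criterion; the intensity, obtained by sandwiching the number of non-Pareto points between the pair count minus $\O(n^3 3^{-d_n})$ and the pair count itself; and the Arratia--Goldstein--Gordon bound for the same pair indicators, with the same index-sharing neighbourhoods and the same fork ($3^{-d_n}$) and chain ($6^{-d_n}$) estimates.

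The differences are cosmetic. The paper transfers the void probabilities through the exact identity $\{\Xi_n^m(B)=0\}=\{\sum_{i\ne j}I_{ij}=0\}$ instead of your $\P\{\Xi_n^m(B)\ne\sum_{i\ne j}I_{ij}\}\to0$. Also, in Step~1 the quantity you must control is the excess $\E\sum_i(D_i-1)^+$, where $D_i$ is the number of points dominated by $X_i$, rather than the number of points dominating at least two others. The same bound $\E\sum_i\binom{D_i}{2}\le n^3 3^{-d_n}$ handles it.
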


Our final result establishes the critical dimension for the phase transition in $K_{n,d}^{(r)}$, $r\ge2$, and the limiting expectations in this regime.

\begin{theorem}\
  \label{thm:3}	
  \begin{enumerate}[(i)]
  \item If $d_n-(\e\log n-\frac12\log\log n)\to-\infty$
    and $d_n\neq 1$,
    then $\E K_{n,d_n}^{(r)}\to\infty$ for any $r\ge2$.
  \item If $d_n-(\e\log n-\frac12\log\log n)\to c\in\R$, then
    \[\E K_{n,d_n}^{(r)}\to
      \frac{\e^{\frac 12-c}}
      {\sqrt{2\pi}}\cdot\frac{\Gamma(r+1-\e)}{r!},\qquad r\ge2,\]
    where $\Gamma$ stands for the Gamma function.
  \item If $d_n-(\e\log n-\frac12\log\log n)\to\infty$, then
    $\E K_{n,d_n}^{(r)}\to0$ for any $r\ge2$.
  \end{enumerate}
\end{theorem}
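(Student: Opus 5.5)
The plan is to compute $\E K_{n,d}^{(r)}$ exactly as a one-dimensional integral and then do an asymptotic (Laplace-type) analysis in the scaling window $v\asymp 1/n$.

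\textbf{Exact formula.} By exchangeability, $\E K_{n,d}^{(r)}=n\,\P\{X_1 \text{ dominates exactly } r \text{ others}\}$. Condition on $X_1$ and let $V=\prod_{k=1}^d X_1^k$ be the volume of $[0,X_1]$. The number of other points dominated by $X_1$ is then $\mathsf{Bin}(n-1,V)$. Since $-\log V$ is a sum of $d$ standard exponentials, $V$ has density $(-\log v)^{d-1}/(d-1)!$ on $(0,1)$. Hence
\[
\E K_{n,d}^{(r)}=n\binom{n-1}{r}\int_0^1 v^r(1-v)^{n-1-r}\frac{(-\log v)^{d-1}}{(d-1)!}\d v .
\]
Substitute $v=u/n$ and write $L=\log n$. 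Using $n\binom{n-1}{r}n^{-r-1}\to 1/r!$, this becomes
\[
\E K_{n,d}^{(r)}\sim\frac{L^{d-1}}{r!\,(d-1)!}\int_0^n u^r\Bigl(1-\frac un\Bigr)^{n-1-r}\Bigl(1-\frac{\log u}{L}\Bigr)^{d-1}\d u .
\]

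\textbf{Critical regime (ii).} Let $d=\e L-\tfrac12\log L+c+o(1)$. For fixed $u$ we have $(1-u/n)^{n-1-r}\to\e^{-u}$ and $(1-\log u/L)^{d-1}\to u^{-\e}$, because $(d-1)/L\to\e$.

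To justify dominated convergence, use $1-x\le \e^{-x}$. This gives $(1-\log u/L)^{d-1}\le u^{-(d-1)/L}$ for all $u\in(0,n)$ (for $u<1$ the base exceeds $1$, but the same inequality $1+y\le \e^{y}$ applies). It also gives $(1-u/n)^{n-1-r}\le \e^{-u/2}$ for large $n$. Since $(d-1)/L\le \e+\eps$ and $r+1-\e-\eps>0$ for $r\ge2$, the integrand is dominated by $u^{r-\e-\eps}\e^{-u/2}$, which is integrable. The integral therefore tends to $\Gamma(r+1-\e)$. For $r=1$ the exponent $r-\e<-1$ and the limit integral diverges, which is exactly why $r=1$ behaves differently.

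For the prefactor, apply Stirling: $(d-1)!\sim\sqrt{2\pi d}\,(d/\e)^d/d$. This gives
\[
\frac{L^{d-1}}{(d-1)!}\sim\frac{\sqrt d}{L\sqrt{2\pi}}\Bigl(\frac{\e L}{d}\Bigr)^d .
\]
Set $\delta=d-\e L=-\tfrac12\log L+c+o(1)$. Then $d\log(\e L/d)=-d\log(1+\delta/(\e L))=-\delta+o(1)$, so $(\e L/d)^d\sim\sqrt L\,\e^{-c}$. Combined with $\sqrt{d/L}\to\sqrt\e$, the prefactor tends to $\e^{1/2-c}/\sqrt{2\pi}$, which gives (ii).

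\textbf{Off-critical regimes (i) and (iii).} Write $d=\e L-\tfrac12\log L+c_n$.
\begin{itemize}
\item Case (iii) with $d\le(\e+\eps)L$: the same domination argument applies verbatim. The integral stays bounded, and the Stirling computation now yields a prefactor of order $\e^{-c_n}\to0$.
\item Case (iii) with larger $d$, say $d\ge(\e+\eps)L$: use a cruder argument. Increasing $d$ pushes the mass of $V$ toward $0$, where $v^r$ is small. One option is to bound the integrand by its Laplace maximum directly. Another is the union bound $\E K^{(r)}\le n\binom{n-1}{r}\E V^r=n\binom{n-1}{r}(r+1)^{-d}$, which tends to $0$ once $d\ge (r+1+\eps)L/\log(r+1)$. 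The gap between these two ranges is to be closed by the Laplace bound, using $\sup_u u^{r-(d-1)/L}\e^{-u/2}$ restricted to $u\ge n^{-A}$ together with a separate trivial estimate near $u=0$.
\item Case (i), with $2\le d$ and $c_n\to-\infty$: restrict the integral to $u\in[1,2]$ to obtain a lower bound of order $L^{d-1}/(d-1)!$. Then check that this quantity tends to infinity. In the window $d\approx \e L$ this is the Stirling computation above with $\e^{-c_n}\to\infty$. For smaller $d$, the term $L^{d-1}/(d-1)!$ is even larger, since it is increasing in $d$ for $d<L$ and is handled via Stirling for $L\le d\le \e L$. The small cases, such as bounded $d\ge2$, are immediate because $L^{d-1}\to\infty$.
\end{itemize}

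\textbf{Main obstacle.} The hardest step is obtaining uniform control in (iii) over the whole range of large $d$, and, in (i), handling the boundary region near $u\to0$ when $(d-1)/L$ approaches $r+1$. There the dominating function $u^{r-(d-1)/L}$ ceases to be integrable, and one must instead use the exact factor $(-\log v)^{d-1}$ together with $v^r$ directly.
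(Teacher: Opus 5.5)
Your arguments for (i) and (ii) follow the paper's route. You use the exact formula via $-\log V\sim\mathsf{Gamma}(d,1)$, the substitution $v=u/n$, and Stirling for $L^{d-1}/(d-1)!$. For (i) you use the lower bound over $u\in[1,2]$ together with monotonicity of $k\mapsto L^{k-1}/(k-1)!$. Replacing the paper's splitting at $(\log n)^{-1}$ and $\log n$ in (ii) by a single dominated-convergence argument is fine. However, your dominating function is wrong for $u\ge1$, where $u^{-(d-1)/L}\le u^{-\e-\eps}$ fails. Take $u^{r-\e-\eps}$ on $(0,1)$ and $u^r\e^{-u/2}$ on $[1,\infty)$.

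\textbf{The gap is in (iii).} You cover $d\le(\e+\eps)L$ by domination and $d\ge(r+1+\eps)L/\log(r+1)$ by the union bound $n\binom{n-1}{r}(r+1)^{-d}$. The range in between is left to an unspecified Laplace estimate, which you single out as the main obstacle, so as written (iii) is not proved there.

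The missing observation is that this range disappears once you use your domination bound at full strength. It works verbatim whenever $(d-1)/L\le\alpha$ for a fixed $\alpha<r+1$, since then the integral is at most $\int_0^1u^{r-\alpha}\d u+\int_1^\infty u^r\e^{-u/2}\d u<\infty$. Because $\log(r+1)\ge\log 3>1$, one has $(r+1)/\log(r+1)<r+1$, so $\alpha$ can be chosen to make the two ranges overlap.

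The paper closes (iii) the same way but with the $r=1$ bound. For $d>2.9\log n$ it uses $\E K^{(r)}_{n,d}\le\E(n-K_{n,d})\le n(n-1)2^{-d}\to0$, valid since $2/\log 2<2.9$. For $d\le 2.9\log n$ it uses monotonicity in $d$ of the integrand on $[0,1]$ and on $[1,n]$, with domination by $x^{r-2.9}$, which is integrable since $r\ge2$.

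On this range, the prefactor bound also does not follow from your expansion $d\log(\e L/d)=-\delta+o(1)$, which requires $\delta^2/L\to0$. Instead, use $(1+x)\log(1+x)\ge x$, which gives $L^{d-1}/(d-1)!\le C\sqrt{d/L}\,\e^{-c_n}$. Alternatively, use the monotonicity of $k\mapsto L^{k-1}/(k-1)!$ for $k>L$, as in the paper.

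Finally, the obstacle you mention for (i) near $u\to0$ does not arise: (i) only needs a lower bound, and there $d\le\e L$.
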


The assumption $d_n\neq 1$ in (i) is needed to exclude the
one-dimensional case, in which $K_{n,1}^{(r)}=1$.

A similar picture arises in a related model in which the points $X_i$ are the atoms of a homogeneous Poisson point measure on $[0,1]^d$ with intensity $\lambda$. All of the above results for non-Pareto points and for points dominating exactly $r$ others remain valid with $n$ replaced throughout by $\lambda$. This can be easily derived from Theorems~\ref{thm:1}\hs--\hs\ref{thm:3} by a standard poissonization argument.

\section{Proof of Theorems \ref{thm:1} and \ref{thm:2}}

The overall idea of the proof is as follows. Since the points are i.i.d.\ uniform in $[0,1]^d$, we may assume without loss of generality that in \eqref{eq:pi} we have $\{k_1,\ldots,k_m\}=\{1,\ldots,m\}$. For a fixed Borel set $U\subset[0,1]^m$, define
\begin{equation}
\label{eq:DS}
D_{n,d}(i)=\sum_{\substack{1\le j\le n\\ j\ne i}}\one\{X_j\preceq X_i\},\qquad
S_{n,d}(U)=\sum_{i=1}^n D_{n,d}(i)\one\{\pi_m X_i\in U\}.
\end{equation}
Thus, $S_{n,d}(U)$ counts all non-Pareto points whose projection lies in $U$, with each point weighted by the number of other points it dominates. Meanwhile, the unweighted number of such points is given by
\begin{equation}
\label{eq:T}
T_{n,d}(U)=\sum_{i=1}^n \one\{D_{n,d}(i)\ge1\}\one\{\pi_m X_i\in U\}.
\end{equation}
Although $T_{n,d}(U)\le S_{n,d}(U)$, we clearly have
\begin{equation}
\label{eq:ST}
\mathbb P\{T_{n,d}(U)=0\}=\mathbb P\{S_{n,d}(U)=0\}.
\end{equation}
The indicators in \eqref{eq:DS} have a noticeably simpler dependence structure than those in \eqref{eq:T}. This allows us to apply the Stein--Chen Poisson approximation in the Arratia--Goldstein--Gordon form \cite{AGG89} (see also \cite{AGG90}) to show that, in the regime
$d_n=\frac{2}{\log 2}\log n+c+\smallO(1)$, $S_{n,d_n}(U)$ converges in distribution to a Poisson random variable. By \eqref{eq:ST}, we obtain convergence of the void probabilities for $T_{n,d_n}(U)$.

The asymptotics of $\E T_{n,d_n}(U)$ can be computed
explicitly. Since, by Kallenberg's criterion, $vd$-convergence of
random point measures is implied by convergence of their means and
void probabilities, Theorem~\ref{thm:2} follows. Setting $U=[0,1]^m$,
we obtain the distributional convergence in
Theorem~\ref{thm:1}\hs(ii); the remaining claims in (ii) follow by
analytic arguments. Finally, (i) is proved using (ii) and a coupling
of the points in different dimensions, and (iii) follows from an
analytic argument.

We now turn to the detailed proof. We will need several lemmas. Recall
that $m\in\mathbb N$ and a Borel subset $U$ of $[0,1]^m$ are fixed,
and $|x|$ denotes the product of the coordinates of $x$. Furthermore,
let $x\wedge y$ denote the component-wise minimum of the vectors $x$
and $y$.

\begin{lemma}
  \label{le:1}
  If $d_n=\frac{2}{\log 2}\log n+c+\smallO(1)$, then
  \[\E T_{n,d_n}(U)\to 2^{m-c}\int_U|x|\d x,\qquad n\to\infty.\]
\end{lemma}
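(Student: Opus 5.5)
By exchangeability, $\E T_{n,d}(U)=n\,\P\{D_{n,d}(1)\ge1,\ \pi_mX_1\in U\}$. We condition on $X_1=x=(y,z)$ with $y\in[0,1]^m$ and $z\in[0,1]^{d-m}$. Given $X_1=x$, the indicators $\one\{X_j\preceq x\}$, $j\ge2$, are i.i.d.\ Bernoulli with parameter $|x|=|y|\,|z|$. Hence
\[
\E T_{n,d}(U)=n\int_U\int_{[0,1]^{d-m}}\Bigl(1-(1-|y||z|)^{n-1}\Bigr)\d z\d y .
\]
Rather than computing this directly, I will sandwich it between a first- and a second-moment bound. The elementary inequalities
\[
(n-1)p-\binom{n-1}{2}p^2\le 1-(1-p)^{n-1}\le (n-1)p
\]
give
\[
n(n-1)\int_U\int |y||z|\d z\d y-\frac{n(n-1)^2}{2}\int_U\int |y|^2|z|^2\d z\d y\le\E T_{n,d}(U)\le n(n-1)\int_U\int |y||z|\d z\d y .
\]

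\textbf{The first-order term.} Since $\int_{[0,1]^{d-m}}|z|\d z=2^{-(d-m)}$, the first-order term equals
\[
n(n-1)\,2^{m-d}\int_U|y|\d y .
\]
With $d=d_n=\frac{2}{\log2}\log n+c+\smallO(1)$ we have $2^{-d_n}=n^{-2}\,2^{-c+\smallO(1)}$. So this term tends to $2^{m-c}\int_U|y|\d y$, which is the claimed limit.

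\textbf{The second-order term.} It remains to show the correction term vanishes. Since $\int|z|^2\d z=3^{-(d-m)}$ and $\int_U|y|^2\d y\le 1$, the correction is at most
\[
\frac{n^3}{2}\,3^{m}\,3^{-d_n}.
\]
Now $3^{-d_n}=n^{-2\log3/\log2}\,\O(1)$, and $2\log3/\log2\approx3.17>3$. Hence the correction is $\O\bigl(n^{3-2\log 3/\log2}\bigr)\to0$.

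\textbf{Main obstacle.} The only delicate point is whether the crude second-order bound suffices, which is exactly the check $2\log_2 3>3$, i.e.\ $9>8$. This holds, so the Bonferroni-type sandwich closes the argument. The $\O(1)$ fluctuations hidden in $\smallO(1)$ only affect constants and do not change the conclusion.
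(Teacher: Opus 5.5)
Your proposal is correct and follows the paper's proof essentially step for step. You condition on $X_1$ to get $n\int\bigl(1-(1-|x|)^{n-1}\bigr)\d x$, apply the same Bonferroni sandwich, and evaluate the integrals as $2^{m-d_n}\int_U|x|\d x$ and $3^{m-d_n}\int_U|x|^2\d x$. The correction term then vanishes for the same reason as in the paper, namely $2\log 3/\log 2>3$.
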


\begin{proof}
For each $i$,
\begin{equation}
\label{eq:PP}
\begin{aligned}
  \P\{X_i\text{ is Pareto},\pi_mX_i\in U\}
  &=\E\Bigl(\P\Bigl(\bigcap_{j\ne i}\{X_j\npreceq X_i\}\hs\mid\hs X_i\Bigr)\hs\one\{\pi_mX_i\in U\}\Bigr)\\
  &=\E\Bigl((1-|X_i|)^{n-1}\hs\one\{\pi_mX_i\in U\}\Bigr)\\
    &=\int_{U\times[0,1]^{d_n-m}}(1-|x|)^{n-1}\d x.
\end{aligned}
\end{equation}
Hence,
\begin{equation}
\begin{aligned}
\label{eq:PP'}
\E T_{n,d_n}(U)&=\sum_{i=1}^n\P\{X_i\text{ is non-Pareto},\pi_mX_i\in U\}\\&=n\int_{U\times[0,1]^{d_n-m}}\bigl(1-(1-|x|)^{n-1}\bigr)\d x.
\end{aligned}
\end{equation}
Since
\begin{equation}
  \label{eq:ie_1}
  (n-1)|x|-\frac{(n-1)(n-2)}2|x|^2\le 1-(1-|x|)^{n-1}\le(n-1)|x|,
\end{equation}
we have
\begin{equation}
\begin{aligned}
\label{eq:ET}
n(n-1)\int_{U\times[0,1]^{d_n-m}}|x|\d x
&-\O\bigl(n^3\bigr)\cdot\int_{U\times[0,1]^{d_n-m}}|x|^2\d x\\&\le\E T_{n,d_n}(U)\le n(n-1)\int_{U\times[0,1]^{d_n-m}}|x|\d x.
\end{aligned}
\end{equation}
The integrals are straightforward to evaluate:
\begin{gather}
\label{eq:int}
\int_{U\times[0,1]^{d_n-m}}|x|\d x=2^{m-d_n}\int_U|x|
\d x=2^{m-c}n^{-2}\bigl(1+\smallO(1)\bigr)\cdot
\int_U|x|\d x,\\
\label{eq:int'}
\int_{U\times[0,1]^{d_n-m}}|x|^2\d x=3^{m-d_n}\int_U|x|^2\d x=\O\Bigl(n^{-\frac{2\log 3}{\log 2}}\Bigr)=\smallO\bigl(n^{-3}\bigr).
\end{gather}
Thus, \eqref{eq:ET} yields $\lim_{n\to\infty}\E T_{n,d_n}(U)=2^{m-c}\int_U|x|\d x$. This completes the proof.
\end{proof}

\begin{lemma}
  \label{le:2}
  If $d_n=\frac{2}{\log 2}\log n+c+\smallO(1)$, then
  \[\Var(n-K_{n,d_n})\to 2^{-c},\qquad n\to\infty.\]
\end{lemma}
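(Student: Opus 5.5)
We write $n-K_{n,d_n}=\sum_{i=1}^n I_i$ with $I_i=\one\{X_i\text{ is non-Pareto}\}$, so that
\[\Var(n-K_{n,d_n})=\sum_{i=1}^n\Var I_i+n(n-1)\operatorname{Cov}(I_1,I_2).\]
By Lemma~\ref{le:1} with $m=1$ and $U=[0,1]$, we have $np\to 2^{-c}$, where $p=\E I_1$. Hence $\sum_i\Var I_i=np-np^2\to2^{-c}$, because $np^2=\O(n^{-1})$. It therefore remains to show that $n^2\operatorname{Cov}(I_1,I_2)\to0$. Since $n^2p^2\to2^{-2c}$, this amounts to proving
\[n^2\,\E(I_1I_2)\to 2^{-2c}.\]
We will do this by matching upper and lower bounds, in each case isolating the dominant configuration and showing that all others are negligible.

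\emph{Upper bound.} On $\{I_1I_2=1\}$ there exist $a\ne1$ and $b\ne2$ with $X_a\preceq X_1$ and $X_b\preceq X_2$. The case $a=2,b=1$ has probability zero. We bound the event by a union over the remaining configurations:
\begin{itemize}
\item $a=2$ and $b=k\ge3$ (or symmetrically $a=k\ge3$, $b=1$). This gives a chain $X_k\preceq X_2\preceq X_1$, of probability $6^{-d_n}$, and there are $\O(n)$ such terms. Since $n^2\cdot n\cdot 6^{-d_n}=\O\bigl(n^{3-2\log6/\log2}\bigr)\to0$, these terms are negligible.
\item $a=b=k\ge3$. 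Then $X_k\preceq X_1$ and $X_k\preceq X_2$. Conditioning on $X_k$ gives probability $\bigl(\int_0^1(1-y)^2\d y\bigr)^{d_n}=3^{-d_n}$, and $n^3\,3^{-d_n}=\O\bigl(n^{3-2\log3/\log2}\bigr)\to0$ because $2\log3/\log2>3$.
\item $a=k$, $b=l$, with $k\ne l$ both at least $3$. By independence the probability is exactly $4^{-d_n}$, so this group contributes at most $n^2(n-2)^2\,4^{-d_n}$. By \eqref{eq:int} this tends to $2^{-2c}$.
\end{itemize}
Altogether, $\limsup_{n\to\infty} n^2\,\E(I_1I_2)\le 2^{-2c}$.

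\emph{Lower bound.} We have $I_1I_2\ge\one\bigl(\bigcup_{k\ne l,\;k,l\ge3}E_{kl}\bigr)$, where $E_{kl}=\{X_k\preceq X_1,\ X_l\preceq X_2\}$. Applying the second Bonferroni inequality,
\[\P\Bigl(\bigcup E_{kl}\Bigr)\ge\sum\P(E_{kl})-\sum_{(k,l)\ne(k',l')}\P(E_{kl}\cap E_{k'l'}).\]
The first sum is $(n-2)(n-3)\,4^{-d_n}$, which after multiplication by $n^2$ tends to $2^{-2c}$. Each intersection $E_{kl}\cap E_{k'l'}$ involves at least three distinct auxiliary indices, or a repeated one. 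A typical pattern is that two points lie below $X_1$ and one below $X_2$. Integrating out the auxiliary points first gives $\E\bigl(|X_1|^2|X_2|\bigr)=(1/3\cdot1/2)^{d_n}=6^{-d_n}$, with $\O(n^3)$ terms. The other patterns (coinciding indices such as $k=l'$, or four distinct indices) give probabilities at most of order $6^{-d_n}$ or $12^{-d_n}$, with at most $n^3$ respectively $n^4$ terms. Thus the second sum times $n^2$ is $\O\bigl(n^{5-2\log6/\log2}+n^{6-2\log12/\log2}\bigr)\to0$, since $2\log6/\log2\approx5.17$ and $2\log12/\log2\approx7.17$. Hence $\liminf_{n\to\infty} n^2\,\E(I_1I_2)\ge2^{-2c}$, and the claim follows.

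The main obstacle is the bookkeeping of the overlapping configurations, in both the union bound and the Bonferroni correction. In each of them one has to identify the correct moment $\E\bigl(|X|^a|Y|^b\bigr)$ and check that the exponent of $n$ is negative. This check is tightest in the shared-dominated-point case, where the comparison is $2\log3/\log2\approx3.17$ against $3$, and in the three-auxiliary-point correction, where it is $2\log6/\log2\approx5.17$ against $5$.
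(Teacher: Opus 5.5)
Your proof is correct and follows essentially the same strategy as the paper. Both reduce the claim to $n(n-1)\E(I_1I_2)\to2^{-2c}$, isolate the configuration with two distinct auxiliary witnesses (probability $4^{-d_n}$) as dominant, and dispose of everything else by comparing powers of $n$ with $3^{-d_n}$ and $6^{-d_n}$.

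The bookkeeping differs. The paper conditions on $(X_1,X_2)$, handles the comparable case with the same chain bound you use, and on the incomparable region $D$ applies the two-sided inequality $0\le r_1r_2-\one\{r_1\ge1\}\one\{r_2\ge1\}\le r_1\binom{r_2}2+r_2\binom{r_1}2$ to the counts $R_{x_1},R_{x_2}$. That gives upper and lower bounds at once, with error terms involving at most three auxiliary points. Your union bound plus Bonferroni over pairs of events $E_{kl}$ avoids the split into $D$ and its complement. The price is extra four-point patterns in the correction term.

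There is one arithmetic slip there. For four distinct indices the probability is $\E(|X_1|^2|X_2|^2)=9^{-d_n}$, not $12^{-d_n}$. This does not affect the argument, since $n^2\cdot n^4\cdot9^{-d_n}=\O\bigl(n^{6-4\log3/\log2}\bigr)\to0$ with $4\log3/\log2\approx6.34$. Your remark that the tightest margins are $3.17$ versus $3$ and $5.17$ versus $5$ remains accurate.
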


\begin{proof}
  By Lemma~\ref{le:1}, we have
  $\E(n-K_{n,d_n})=\E T_{n,d_n}\bigl([0,1]^m\bigr)\to 2^{-c}$.
  As
  \begin{displaymath}
    \Var\xi=\E \xi(\xi-1)+\E \xi-(\E \xi)^2,
  \end{displaymath}
  it remains to show that
  \[\E\bigl((n-K_{n,d_n})(n-K_{n,d_n}-1)\bigr)\to 2^{-2c}.\]	
  Write
  $n-K_{n,d_n}=\sum_{i=1}^n\one\{X_i\text{ is non-Pareto}\}$.
  Then
  \begin{equation}
    \label{eq:fm}
    \begin{aligned}
      \E\bigl((n-K_{n,d_n}&)(n-K_{n,d_n}-1)\bigr)\\&=
      \sum_{i\ne j}\E\bigl(\one\{X_i\text{ is non-Pareto}\}
      \one\{X_j\text{ is non-Pareto}\}\bigr)\\&=
      n(n-1)\hs\P\{\text{both }X_1\text{ and }X_2\text{ are non-Pareto}\}.
    \end{aligned}
  \end{equation}
  For $x\in[0,1]^{d_n}$, let $R_x=\sum_{i=3}^n\one\{X_i\preceq x\}$,
  and set
  \[D=\bigl\{(x_1,x_2)\in[0,1]^{d_n}\times[0,1]^{d_n}\colon x_1\npreceq x_2,
    x_2\npreceq x_1\bigr\}.\]
  Then
  \begin{equation}
    \label{eq:2NP}
    \begin{aligned}
      \P\{X_1\text{ and }X_2\text{ are non-Pareto}\}
      =\P\bigl\{(X_1,X_2)\in D,R_{X_1}\ge1,R_{X_2}\ge1&\bigr\}
      \\+\P\{X_1\preceq X_2,R_{X_1}\ge1\}+\P\{X_2\preceq X_1,R_{X_2}\ge1&\}.
    \end{aligned}
  \end{equation}
  By the union bound,
  \begin{equation}
  	\begin{aligned}
    \label{eq:term2}
    \P\{X_1\preceq X_2,R_{X_1}\ge1\}
    &\le (n-2)\hs\P\{X_3\preceq X_1\preceq X_2\}\\&=
    (n-2)\,6^{-d_n}\sim 6^{-c}n^{1-2\frac{\log 6}{\log 2}}=\smallO\bigl(n^{-2}\bigr),
    \end{aligned}
  \end{equation}
  and the same applies to $\P\{X_2\preceq X_1,R_{X_2}\ge1\}$.
  Thus, by \eqref{eq:fm}\hs--\hs\eqref{eq:term2} and conditioning on $(X_1,X_2)$,
  \begin{equation}
  	\begin{multlined}
    \label{eq:fm2}
    \E\bigl((n-K_{n,d_n})(n-K_{n,d_n}-1)\bigr)\\=
    n(n-1)\int_D\P\{R_{x_1}\ge1,R_{x_2}\ge1\}\d x_1\d x_2+\smallO(1).
    \end{multlined}
  \end{equation}
  
  For our purposes, the following rough bound will suffice:
  \[0\le r_1r_2-\one\{r_1\ge1\}\one\{r_2\ge1\}\le r_1\binom {r_2}2+
    r_2\binom {r_1}2,\qquad r_1,r_2\in\N_0.\]
  This inequality is trivial if $r_1=0$ or $r_2=0$, and easily verified for $r_1,r_2\ge1$. Hence,
  \begin{equation}
    \label{eq:rs}
    \bigl|\P\{R_{x_1}\ge1,R_{x_2}\ge1\}-\E(R_{x_1}R_{x_2})\bigr|
    \le\E\biggl(R_{x_1}\binom{R_{x_2}}2\biggr)+
    \E\biggl(R_{x_2}\binom{R_{x_1}}2\biggr).
  \end{equation}
  Since $X_3,\ldots,X_n$ are i.i.d.\ uniform on $[0,1]^{d_n}$, a straightforward counting argument yields
  \begin{gather*}
    \E(R_{x_1}R_{x_2})=(n-2)|x_1\wedge x_2|+(n-2)(n-3)|x_1||x_2|,\\
    \E\biggl(R_{x_1}\binom{R_{x_2}}2\biggr)
    =(n-2)(n-3)|x_2||x_1\wedge x_2|+\frac{(n-2)(n-3)(n-4)}2|x_1||x_2|^2,\\
    \E\biggl(R_{x_2}\binom{R_{x_1}}2\biggr)
    =(n-2)(n-3)|x_1||x_1\wedge x_2|+\frac{(n-2)(n-3)(n-4)}2|x_1|^2|x_2|.
  \end{gather*}
  For instance, the last formula is obtained by splitting according to
  whether the point counted by $R_{x_2}$ is one of the two points
  counted by $\binom{R_{x_1}}{2}$ or not. The former case yields the
  first term on the right-hand side, while the latter yields the second.
  
  Therefore, integrating \eqref{eq:rs} over $D$, we get
  \begin{equation}
    \label{eq:mainD}
    \begin{aligned}
      \int_D&\P\{R_{x_1}\ge1,R_{x_2}\ge1\}\d x_1\d x_2
      \\&=(n-2)\int_D|x_1\wedge x_2|\d x_1\d x_2
      +(n-2)(n-3)\int_D|x_1||x_2|\d x_1\d x_2
      \\&+\O\Bigl(n^2\!\!\int_{[0,1]^{2d_n}}\!\!|x_1||x_1\wedge x_2|\d x_1\d x_2+n^3\!\!\int_{[0,1]^{2d_n}}\!\!|x_1|^2|x_2|\d x_1\d x_2\Bigr).
    \end{aligned}
  \end{equation}
  
  The integrals are straightforward to evaluate by probabilistic arguments:
  \begin{gather*}
    \int_D|x_1\wedge x_2|\d x_1\d x_2
    =\P\bigl\{(X_1,X_2)\in D,\hs X_3\preceq X_1,\hs X_3\preceq X_2\bigr\}
    =3^{-d_n}-2\cdot 6^{-d_n}=\smallO\bigl(n^{-3}\bigr),\\
    \int_D|x_1||x_2|\d x_1\d x_2
    =\P\bigl\{(X_1,X_2)\in D,\hs X_3\preceq X_1,\hs X_4\preceq X_2\bigr\}
    =4^{-d_n}-2\cdot 8^{-d_n}\sim 2^{-2c}n^{-4},\\
    \int_{[0,1]^{2d_n}}|x_1||x_1\wedge x_2|\d x_1\d x_2
    =\P\{X_3\preceq X_1,\hs X_4\preceq X_1,\hs X_4\preceq X_2\}=(24/5)^{-d_n}=\smallO\bigl(n^{-4}\bigr),\\
    \int_{[0,1]^{2d_n}}|x_1|^2|x_2|\d x_1\d x_2=
    \P\{X_3\preceq X_1,\hs X_4\preceq X_1,\hs X_5\preceq X_2\}
    =6^{-d_n}=\smallO\bigl(n^{-5}\bigr).
  \end{gather*}
  Hence, by \eqref{eq:mainD}, $\int_D\P\{R_{x_1}\ge1,R_{x_2}\ge1\}\d x_1\d x_2\sim2^{-2c}n^{-2}$, and \eqref{eq:fm2} yields the claim.
\end{proof}

In principle, the same approach could be extended to higher moments, which, combined with the method of moments, would yield a Poisson approximation. However, the required arguments become increasingly technical as the order of the moment grows. We therefore prefer using the more elegant Stein--Chen approach based on dependency neighborhoods.	

\begin{lemma}
	\label{le:3}
	If $d_n=\frac{2}{\log 2}\log n+c+\smallO(1)$, then
	\[\P\bigl\{S_{n,d_n}(U)=0\bigr\}\to
	\exp\bigl(-2^{m-c}\int_U |x|\d x\bigr),\qquad n\to\infty.\]
\end{lemma}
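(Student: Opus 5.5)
We apply the Arratia--Goldstein--Gordon form of the Stein--Chen method \cite{AGG89} to $S_{n,d_n}(U)$, written as a sum of indicators over ordered pairs. For $\alpha=(i,j)$ with $i\ne j$, set
\[
I_\alpha=\one\{X_j\preceq X_i,\ \pi_mX_i\in U\},
\]
so that $S_{n,d_n}(U)=\sum_\alpha I_\alpha$. The indicator $I_\alpha$ depends only on $(X_i,X_j)$. As dependency neighbourhood of $\alpha$ we take $B_\alpha=\{\beta=(k,l)\colon\{k,l\}\cap\{i,j\}\ne\emptyset\}$. Then $I_\alpha$ is independent of $\{I_\beta\colon\beta\notin B_\alpha\}$, so the third AGG term $b_3$ vanishes. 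The theorem then gives
\[
\bigl|\P\{S_{n,d_n}(U)=0\}-\e^{-\mu_n}\bigr|\le b_1+b_2,\qquad \mu_n=\E S_{n,d_n}(U).
\]

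\textbf{Step 1 (mean).} We have $\mu_n=n(n-1)\,\P\{X_2\preceq X_1,\pi_mX_1\in U\}=n(n-1)\int_{U\times[0,1]^{d_n-m}}|x|\d x$. By \eqref{eq:int} this tends to $2^{m-c}\int_U|x|\d x$.

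\textbf{Step 2 ($b_1$).} Since $p_\alpha=\E I_\alpha=\O(n^{-2})$ and $|B_\alpha|=\O(n)$, we get $b_1=\sum_\alpha\sum_{\beta\in B_\alpha}p_\alpha p_\beta=\O(n^2\cdot n\cdot n^{-4})=\O(n^{-1})\to0$.

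\textbf{Step 3 ($b_2$).} This is the main step. We need $b_2=\sum_\alpha\sum_{\beta\in B_\alpha\setminus\{\alpha\}}\E(I_\alpha I_\beta)\to0$. We drop the restriction $\pi_mX_i\in U$ (this only increases the terms) and classify the overlaps of two distinct pairs sharing at least one index. Each configuration is a relation among three points, with $\O(n^3)$ choices of indices, so we need each probability to be $\smallO(n^{-3})$. Since $2^{d_n}\asymp n^2$, an event of probability $a^{-d_n}$ is $\smallO(n^{-3})$ exactly when $a>2^{3/2}\approx2.83$. The cases are:
\begin{enumerate}[(a)]
\item $(i,j),(i,l)$, a common dominating point: $\P\{X_j\preceq X_i,X_l\preceq X_i\}=\E|X_i|^2=3^{-d_n}$.
\item $(i,j),(k,j)$, a common dominated point: by symmetry $\E(1-|\cdot|)$-type computation gives again $3^{-d_n}$ (coordinatewise $\P\{u\le v,u\le w\}=1/3$).
\item Chains $(i,j),(j,l)$ or $(i,j),(k,i)$: probability $6^{-d_n}$.
\item The reversed pair $(i,j),(j,i)$: probability $0$ a.s.
\end{enumerate}
Since $3>2^{3/2}$, we get $b_2=\O(n^3 3^{-d_n})=\O\bigl(n^{3-2\log3/\log2}\bigr)\to0$, because $2\log 3/\log2\approx3.17>3$.

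Combining the three steps gives the claim. The only delicate point is the margin in case (a)/(b): the bound works because $\log_2 3>3/2$. I expect no further obstacles, since the exponents of all the other cases are far better.
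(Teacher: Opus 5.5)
Your proposal is correct and follows essentially the same route as the paper: the Arratia--Goldstein--Gordon bound applied to the pair indicators $\one\{X_j\preceq X_i,\pi_mX_i\in U\}$, with neighbourhoods consisting of pairs that share an index, so that $b_3=0$, $b_1=\O(n^{-1})$, and $b_2$ is controlled through the overlap probabilities $3^{-d_n}$, $6^{-d_n}$ and $0$. The paper runs the same case analysis, and the decisive margin is likewise $3^{-d_n}=\smallO(n^{-3})$, which holds because $2\log3/\log2>3$.
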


\begin{proof}
	Denote $[n]=\{1,\ldots,n\}$ and
	$[n]_{\ne}^2=\bigl\{(i,j)\in[n]^2\colon i\ne j\bigr\}$.
	It follows from \eqref{eq:DS} that
	\[S_{n,d_n}(U)=\sum_{(i,j)\in[n]_{\ne}^2}
	\one\{X_j\preceq X_i,\pi_m X_i\in U\}.\]
	Let $A_{ij}=\{X_j\preceq X_i,\pi_m X_i\in U\}$ for $(i,j)\in[n]_{\ne}^2$. Observe that $A_{ij}$ and $A_{i'j'}$ are independent whenever $\{i,j\}\cap\{i',j'\}=\emptyset$ and define the dependency neighborhood of $(i,j)$ by
	\begin{align*}
	\mathcal N_{ij}&=\bigl\{(i',j')\in[n]_{\ne}^2\colon\{i,j\}\cap\{i',j'\}
	\ne\emptyset\bigr\}
	\\&=\bigl\{(i,j)\bigr\}\cup\bigl\{(j,i)\bigr\}
	\cup\!\!\bigcup_{k\in[n]\setminus\{i,j\}}\!\!
	\bigl\{(i,k),(j,k),(k,i),(k,j)\bigr\}.
	\end{align*}
	Thus, $\mathcal N_{ij}$ contains $4n-6$ pairs.
	
	The Poisson approximation in the form of the Arratia--Goldstein--Gordon bound (see Theorem~1 in \cite{AGG89}) yields
	\begin{equation}
	\label{eq:AGG}
	\begin{multlined}
	\Bigl|\P\bigl\{S_{n,d_n}(U)=0\bigr\}-
	\exp\Bigl(-\!\!\!\sum_{(i,j)\in[n]_{\ne}^2}
	\!\!\!\P(A_{ij})\Bigr)\Bigr|
	\le\sum_{(i,j)\in[n]_{\ne}^2}
	\sum_{(i',j')\in\mathcal N_{ij}}\P(A_{ij})\P(A_{i'j'})\\+
	\sum_{(i,j)\in[n]_{\ne}^2}
	\sum_{(i',j')\in\mathcal N_{ij}\setminus\{(i,j)\}}\P(A_{ij}\cap A_{i'j'}).
	\end{multlined}
	\end{equation}
	Since by \eqref{eq:int},
	\begin{equation}
	\label{eq:int2}
	\P(A_{ij})=
	\int_{U\times [0,1]^{d_n-m}}|x|\d x\sim 2^{m-c}n^{-2}\cdot
	\int_U|x|\d x,\qquad n\to\infty,
	\end{equation}
	the first double sum on the right-hand side of \eqref{eq:AGG}
        is of the order
        \[n(n-1)(4n-6)\mathcal O\bigl(n^{-4}\bigr)\to0.\]
    Furthermore, since
	$\P(A_{ij}\cap A_{ji})=0$ for $i\ne j$, and, for pairwise distinct indices,
	\begin{gather*}
	\P(A_{ij}\cap A_{i'j})
	\le\P\{X_j\preceq X_i,\hs X_j\preceq X_{i'}\}=3^{-d_n}=\smallO\bigl(n^{-3}\bigr),
	\\\P(A_{ij}\cap A_{ij'})
	\le\P\{X_j\preceq X_i,\hs X_{j'}\preceq X_i\}=3^{-d_n}=\smallO\bigl(n^{-3}\bigr),
	\\\P(A_{ij}\cap A_{ji'})
	\le\P\{X_{i'}\preceq X_j\preceq X_i\}=6^{-d_n}=\smallO\bigl(n^{-5}\bigr),
	\\\P(A_{ij}\cap A_{j'i})
	\le\P\{X_j\preceq X_i\preceq X_{j'}\}
	=6^{-d_n}=\smallO\bigl(n^{-5}\bigr),
	\end{gather*}
	the second double sum on the right-hand side of \eqref{eq:AGG}
        is of the order
        \[n(n-1)(4n-8)\smallO\bigl(n^{-3}\bigr)\to0.\]
	Finally, by \eqref{eq:int2}, the exponential term on the left-hand side of \eqref{eq:AGG} converges to
	$\exp\bigl(-2^{m-c}\int_U |x|\d x\bigr)$ as $n\to\infty$. Hence, \eqref{eq:AGG} yields the claim.
\end{proof}

Note that the Arratia--Goldstein--Gordon bound in fact yields the convergence of $S_{n,d_n}(U)$ to the corresponding Poisson random variable in total variation distance. For our purposes, the established convergence of the void probabilities is sufficient.

\begin{proof}[Proof of Theorem \ref{thm:2}]
By Kallenberg's criterion (see, e.g., Theorem~4.18 in \cite{K17}), it suffices to show that
\[\E\hs\Xi_n^m(U)\to \E\hs\Xi^m(U)
\quad\text{and}\quad
\P\{\Xi_n^m(U)=0\}\to\P\{\Xi^m(U)=0\},\qquad n\to\infty,\]
for every $U\in\B(\R^m)$. Since $\Xi_n^m(U)=T_{n,d_n}(U)$ and
$\Xi^m(U)\sim\mathsf{Pois}\bigl(2^{m-c}\int_U|x|\d x\bigr)$,
the former convergence follows from Lemma~\ref{le:1}, while the latter follows from Lemma~\ref{le:3} and \eqref{eq:ST}.
\end{proof}

\begin{proof}[Proof of Theorem \ref{thm:1}]
The distributional convergence in (ii) follows from Theorem~\ref{thm:2}, while the convergence of expectations and variances follows from Lemmas~\ref{le:1} and \ref{le:2}. To prove that $\sum_{r=2}^{n-1} K_{n,d_n}^{(r)}\xrightarrow{p}0$, it is enough to show that
\begin{equation}
\label{eq:EK1}
\lim_{n\to\infty}\E K_{n,d_n}^{(1)}=2^{-c};
\end{equation}
indeed, combined with the already proved convergence
$\E(n-K_{n,d_n})\to 2^{-c}$, this implies
$\E\sum_{r=2}^{n-1} K_{n,d_n}^{(r)}\to0,$
and thus, by non-negativity, the convergence in probability.

Arguing as in \eqref{eq:PP}, we obtain
\[\E K_{n,d_n}^{(1)}=n\P\{X_1\text{ dominates exactly one other point}\}
=n(n-1)\int_{[0,1]^{d_n}}|x|(1-|x|)^{n-2}\d x.\]
Hence, by \eqref{eq:ie_1},
\begin{equation*}
\E K_{n,d_n}^{(1)}\ge n(n-1)\int_{[0,1]^{d_n}}|x|\d x-n(n-1)(n-2)\int_{[0,1]^{d_n}}|x|^2\d x,
\end{equation*}
and thus, by \eqref{eq:int} and \eqref{eq:int'}, $\liminf_{n\to\infty}\E K_{n,d_n}^{(1)}\ge 2^{-c}$.
Since
\[\E K_{n,d_n}^{(1)}\le\E(n-K_{n,d_n})\to 2^{-c},\]
\eqref{eq:EK1} follows.

To prove (i), we combine (ii) with a coupling argument. Let $\bigl(X_i^k\bigr)_{i,k\in\N}$ be independent random variables uniformly distributed on $[0,1]$. Then, for fixed $n$ and $d$, the points $X_i=\bigl(X_i^1,\ldots,X_i^d\bigr)$, $i=1,\ldots,n$, are independent and uniformly distributed in $[0,1]^d$. Moreover, $n-K_{n,d'}\le n-K_{n,d}$ for $d'\ge d$, since under this coupling any non-Pareto point in higher dimension is also non-Pareto in lower dimension.

Let $d_n-\frac 2{\log 2}\log n\to-\infty$. Fix $c\in\R$ and consider a sequence $\bigl(d_n^{\langle c\rangle}\bigr)_{n\in\N}$ such that $d_n^{\langle c\rangle}\ge d_n$ for all $n$ and $d_n^{\langle c\rangle}-\frac{2}{\log 2}\log n\to c$. Then
$n-K_{n,d_n}\ge n-K_{n,d_n^{\langle c\rangle}}$, and hence, for any $N\in\N$,
\[\liminf_{n\to\infty}\P\{n-K_{n,d_n}\ge N\}\ge
\lim_{n\to\infty}\P\bigl\{n-K_{n,d_n^{\langle c\rangle}}\ge N\bigr\}
=\P\{Z_c\ge N\},\]
where $Z_c\sim\mathsf{Pois}(2^{-c})$. Letting now $c\to -\infty$, we
obtain $\lim_{n\to\infty}\P\{n-K_{n,d_n}\ge N\}=1$, which implies
$n-K_{n,d_n}\xrightarrow{p}\infty$ and proves (i).

Finally, let $d_n-\frac{2}{\log 2}\log n\to\infty$. Then (iii) follows
from
\begin{equation}
  \label{eq:thm:1_iii}
  \E(n-K_{n,d_n})=\E T_{n,d_n}\big([0,1]^m\big)\leq\E
  S_{n,d_n}\big([0,1]^m\big)=n(n-1)2^{-d_n}\to0.\qedhere
\end{equation}
\end{proof}

\section{Proof of Theorem \ref{thm:3}}

Fix an integer $r\ge 2$. Arguing as in \eqref{eq:PP} and \eqref{eq:PP'}, we obtain
\begin{align*}
	\E K_{n,d_n}^{(r)}&=n\hs\E\Bigl(\P\Bigl(\bigcap_{j\in\sR}
	\{X_j\preceq X_1\}\cap\hspace{-14pt}\bigcap_{j\in[n]\setminus(\{1\}\cup\sR)}\hspace{-14pt}\{X_j\npreceq X_1\}\\
	&\hspace{5cm}\text{ for some $\sR\subset[n]\setminus\{1\}$ of cardinality $r$}\hs\mid\hs X_1\Bigr)\Bigr)\\
	&=n\binom{n-1}r\E\bigl(|X_1|^r(1-|X_1|)^{n-1-r}\bigr),\qquad n\ge r+1.
\end{align*}

Since $|X_1|$ is the product of the $d_n$ i.i.d.\ $\mathsf{Unif}([0,1])$ coordinates of $X_1$, we may write $|X_1|=\e^{-G}$ with $G\sim\mathsf{Gamma}(d_n,1)$. Consequently, $|X_1|$ has density
\[f_{|X_1|}(x)=\frac{(-\log x)^{d_n-1}}{(d_n-1)!},\qquad x\in(0,1].\]
Therefore, as $n\to\infty$,
\begin{equation}
\begin{aligned}
	\label{eq:np_r}
	\E K_{n,d_n}^{(r)}
	&\sim\frac{n^{r+1}}{r!\hs(d_n-1)!}
	\int_0^1 x^r(1-x)^{n-1-r}(-\log x)^{d_n-1}\d x\\
	&=\frac 1{r!\hs(d_n-1)!}\int_0^n x^r\Bigl(1-\frac xn\Bigr)^{n-1-r}
	\Bigl(\log \frac nx\Bigr)^{d_n-1}\d x\\
	&=\frac {(\log n)^{d_n-1}}{r!\hs(d_n-1)!}\int_0^n x^r\Bigl(1-\frac xn\Bigr)^{n-1-r}
	\Bigl(1-\frac{\log x}{\log n}\Bigr)^{d_n-1}\d x.
\end{aligned}
\end{equation}

The remainder of the proof is mainly analytic and consists of an
asymptotic analysis of this integral by Laplace's method in the regime
$d_n=\e\log n-\frac12\log\log n+c+\smallO(1)$. We will need several
lemmas.

\begin{lemma}
	\label{le:np_r_1}
	If $d_n=\e\log n-\frac 12\log\log n+c+\smallO(1)$, then
	\[\frac{(\log n)^{d_n-1}}{(d_n-1)!}\to
	\frac{\e^{\frac 12-c}}{\sqrt{2\pi}},\qquad n\to\infty.\]
\end{lemma}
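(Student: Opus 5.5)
This is a direct Stirling computation. Write $k=d_n-1$ and $L=\log n$, so that $k=\e L-\frac12\log L+c-1+\smallO(1)$. Stirling's formula gives
\[
k!=\sqrt{2\pi k}\,k^k\e^{-k}\bigl(1+\smallO(1)\bigr),
\]
and therefore
\[
\log\frac{L^{k}}{k!}=k\log\frac{L}{k}+k-\frac12\log(2\pi k)+\smallO(1).
\]
The only delicate term is $k\log(L/k)$, because $k$ is of order $L$. We therefore need $\log(L/k)$ up to an error of $\smallO(1/L)$.

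Write $k=\e L(1+\delta)$ with
\[
\delta=\frac{-\frac12\log L+c-1+\smallO(1)}{\e L}\to0 .
\]
Then $\log(L/k)=-1-\log(1+\delta)=-1-\delta+\O(\delta^2)$. Since $k\delta^2=\O\bigl((\log L)^2/L\bigr)=\smallO(1)$, this gives
\[
k\log\frac Lk=-k-k\delta+\smallO(1).
\]
The term $-k$ cancels the $+k$ from Stirling. It remains to evaluate $k\delta=\e L(1+\delta)\delta=\e L\delta+\e L\delta^2$. Here $\e L\delta^2=\smallO(1)$, and $\e L\delta=-\frac12\log L+c-1+\smallO(1)$. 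Hence
\[
k\log\frac Lk=\tfrac12\log L-c+1+\smallO(1).
\]

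For the last Stirling term, $\frac12\log(2\pi k)=\frac12\log(2\pi\e L)+\smallO(1)=\frac12\log(2\pi)+\frac12+\frac12\log L+\smallO(1)$. Collecting everything,
\[
\log\frac{L^k}{k!}=\tfrac12\log L-c+1-\tfrac12\log(2\pi)-\tfrac12-\tfrac12\log L+\smallO(1)=\tfrac12-c-\tfrac12\log(2\pi)+\smallO(1).
\]
The two $\frac12\log L$ terms cancel, and exponentiating gives the limit $\e^{\frac12-c}/\sqrt{2\pi}$.

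The main obstacle is bookkeeping in the second-order expansion. One has to check that the quadratic terms $k\delta^2$ and $L\delta^2$ vanish; they are of size $(\log\log n)^2/\log n$, so they do. One also has to make sure the $\smallO(1)$ in $d_n$ enters only additively. It does, because it enters through $k\delta$ with coefficient $1+\smallO(1)$.
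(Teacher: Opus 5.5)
Your proof is correct and is essentially the paper's argument: Stirling's formula, then a second-order expansion of $\log(d_n-1)$ around $1+\log\log n$. Writing $d_n-1=\e\log n+a_n$ as the paper does, your $\delta$ equals $a_n/(\e\log n)$, and the quadratic error vanishes because $(\log\log n)^2/\log n\to0$.

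One cosmetic slip: the display $k\log\frac Lk=\frac12\log L-c+1+\smallO(1)$ should read $k\log\frac Lk+k=\frac12\log L-c+1+\smallO(1)$, since you had already cancelled $-k$ against Stirling's $+k$. Your final tally uses the correct version.
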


\begin{proof}
  By Stirling's formula, as $n\to\infty$,
  \begin{equation}
    \begin{aligned}
      \label{eq:St}
      \frac{(\log n)^{d_n-1}}{(d_n-1)!}&\sim
      \frac{(\log n)^{d_n-1}\e^{d_n-1}}{\sqrt{2\pi (d_n-1)}(d_n-1)^{d_n-1}}\\
      &=\frac 1{\sqrt{2\pi (d_n-1)}}
      \exp\bigl((d_n-1)(1+\log \log n-\log (d_n-1))\bigr).
    \end{aligned}
  \end{equation}
  Denoting $a_n=-\frac 12\log\log n+c-1+\smallO(1)$, we have
  \begin{multline*}
  \log (d_n-1)=\log(\e\log n+a_n)\\=\log(\e\log n)+ \log\Bigl(1+\frac{a_n}{\e\log n}\Bigr)
    =1+\log\log n+\frac{a_n}{\e\log n}+\delta_n,
  \end{multline*}
  where $\delta_n=\O\bigl(\frac{\log\log n}{\log n}\bigr)^2=\smallO\bigl(\frac 1{\log n}\bigr)$.
  Hence,
  \[(d_n-1)(1+\log \log n-\log (d_n-1))=-(\e\log n+a_n)\Bigl(\frac{a_n}{\e\log n}+\delta_n\Bigr)=-a_n+\smallO(1).\]
  Thus, \eqref{eq:St} yields
  \[\frac{(\log n)^{d_n-1}}{(d_n-1)!}
    \sim\frac 1{\sqrt{2\pi\e\log n}}\exp\Bigl(\frac 12
    \log\log n-c+1+\smallO(1)\Bigr)
    \to\frac{\e^{1-c}}{\sqrt{2\pi\e}}.\qedhere\]
\end{proof}

\begin{remark}
  \label{re:np_r_1}
  Lemma \ref{le:np_r_1} also implies the following two one-sided
  results.
  \begin{enumerate}[(i)]
    \item If $d_n\geq 2$ for large $n$ and $d_n-(\e\log
      n-\frac{1}{2}\log\log n)\to -\infty$, then
      \begin{displaymath}
        \frac{(\log n)^{d_n-1}}{(d_n-1)!}\to\infty.
      \end{displaymath}
    \item If $d_n-(\e\log
      n-\frac{1}{2}\log\log n)\to \infty$, then
      \begin{displaymath}
        \frac{(\log n)^{d_n-1}}{(d_n-1)!}\to 0.
      \end{displaymath}
    \end{enumerate}
    This follows from Lemma \ref{le:np_r_1} and the fact that, for
    fixed $n$, $\frac{(\log n)^{k-1}}{(k-1)!}$ is increasing for
    $k<\log n$ and decreasing for $k>\log n$.
\end{remark}

\begin{lemma}
  \label{le:np_r_2}
  If $d_n=\e\log n-\frac 12\log\log n+c+\smallO(1)$, then
  \begin{equation}
    \label{eq:rest}
    \frac {(\log n)^{d_n-1}}{(d_n-1)!}\int_{\log n}^n
    x^r\Bigl(1-\frac xn\Bigr)^{n-1-r}
    \Bigl(1-\frac{\log x}{\log n}\Bigr)^{d_n-1}\d x\to0,\qquad n\to\infty.
  \end{equation}	
\end{lemma}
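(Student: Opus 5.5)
The plan is to show that the prefactor stays bounded and that the integral over $[\log n,n]$ tends to $0$ on its own. No Laplace-type analysis is needed for this part, since on $[\log n,n]$ the integrand is already uniformly controlled by an integrable function whose tail vanishes.

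First, by Lemma~\ref{le:np_r_1} the prefactor $\frac{(\log n)^{d_n-1}}{(d_n-1)!}$ converges to the finite constant $\frac{\e^{1/2-c}}{\sqrt{2\pi}}$, so it is bounded for large $n$. It therefore suffices to prove that
\[
I_n:=\int_{\log n}^n x^r\Bigl(1-\frac xn\Bigr)^{n-1-r}\Bigl(1-\frac{\log x}{\log n}\Bigr)^{d_n-1}\d x\to0 .
\]

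Next, bound the two factors of the integrand separately on $[\log n,n]$, for $n$ large enough that $\log n\ge1$.
\begin{enumerate}[(a)]
\item For $1\le x\le n$ we have $0\le \log x\le\log n$, so $0\le 1-\frac{\log x}{\log n}\le 1$. The third factor is therefore at most $1$.
\item Using $1-t\le \e^{-t}$,
\[
\Bigl(1-\frac xn\Bigr)^{n-1-r}\le \exp\Bigl(-x\,\frac{n-1-r}{n}\Bigr)\le \e^{-x/2}
\]
for all $n\ge 2(r+1)$.
\end{enumerate}
Combining (a) and (b) gives
\[
0\le I_n\le \int_{\log n}^\infty x^r\e^{-x/2}\d x .
\]
The right-hand side is the tail of the convergent integral $\int_0^\infty x^r\e^{-x/2}\d x=2^{r+1}r!$. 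Since $\log n\to\infty$, this tail tends to $0$; explicitly it is $\O\bigl((\log n)^r n^{-1/2}\bigr)$.

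Multiplying by the bounded prefactor gives \eqref{eq:rest}. There is no genuine obstacle in this argument. The only point requiring care is that the crude bound by $1$ in (a) is legitimate, which holds because $x\ge\log n\ge1$ keeps $\log x$ nonnegative. If a quantitative rate were wanted, one could instead use (a) in the sharper form $\bigl(1-\frac{\log\log n}{\log n}\bigr)^{d_n-1}\approx(\log n)^{-\e}$, but this is not needed here.
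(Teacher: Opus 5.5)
Your proposal is correct and follows essentially the paper's argument: bound $\bigl(1-\frac xn\bigr)^{n-1-r}\le \e^{-x/2}$, reduce to the vanishing tail $\int_{\log n}^\infty x^r\e^{-x/2}\d x$, and use Lemma~\ref{le:np_r_1} to keep the prefactor bounded. You also state explicitly that $\bigl(1-\frac{\log x}{\log n}\bigr)^{d_n-1}\le 1$ for $1\le x\le n$, a step the paper leaves implicit.
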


\begin{proof}
  For large $n$ and $x\in[0,n]$, the inequality $1-t\le\e^{-t}$ implies
  \[\Bigl(1-\frac xn\Bigr)^{n-1-r}\le
    \exp\Bigl(-\frac{(n-1-r)x}n\Bigr)\le\e^{-x/2}.\]
  Hence, for such $n$, the left-hand side of \eqref{eq:rest} is bounded by
  \[\frac{(\log n)^{d_n-1}}{(d_n-1)!}\int_{\log n}^nx^r\e^{-x/2}\d x,\]
  which vanishes as $n\to\infty$ in view of Lemma~\ref{le:np_r_1}.
\end{proof}

\begin{lemma}
  \label{le:np_r_3}
  If $d_n=\e\log n-\frac 12\log \log n+c+\smallO(1)$, then
  \begin{equation*}
    \int_0^{\log n}
    x^r\Bigl(1-\frac xn\Bigr)^{n-1-r}
    \Bigl(1-\frac{\log x}{\log n}\Bigr)^{d_n-1}\d x\to \Gamma(r+1-\e),\qquad n\to\infty.
  \end{equation*}
\end{lemma}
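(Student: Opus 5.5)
The plan is to apply dominated convergence to the integrand
\[
g_n(x)=x^r\Bigl(1-\frac xn\Bigr)^{n-1-r}\Bigl(1-\frac{\log x}{\log n}\Bigr)^{d_n-1}\one\{0<x\le\log n\}.
\]
I will first identify its pointwise limit and then build an integrable majorant. The limit should be $x^{r-\e}\e^{-x}$, since $\int_0^\infty x^{r-\e}\e^{-x}\d x=\Gamma(r+1-\e)$. This integral is finite precisely because $r\ge2>\e-1$, which is where the restriction $r\ge2$ enters.

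\emph{Pointwise limit.} Fix $x>0$. For all large $n$ we have $x\le\log n$, and $(1-x/n)^{n-1-r}\to\e^{-x}$. Put $t_n=\log x/\log n\to0$. Then
\[
(d_n-1)\log(1-t_n)=-(d_n-1)t_n+\O\bigl(d_nt_n^2\bigr).
\]
Because $d_n/\log n\to\e$, we get $(d_n-1)t_n\to\e\log x$, while $d_nt_n^2=\O\bigl((\log x)^2/\log n\bigr)\to0$. Hence the third factor tends to $x^{-\e}$, and $g_n(x)\to x^{r-\e}\e^{-x}$.

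\emph{Majorant.} In both regions I will use $1-t\le\e^{-t}$, which holds for all real $t$; for $t<0$ it reads $1+s\le\e^{s}$ with $s=-t>0$. This gives
\[
\Bigl(1-\frac{\log x}{\log n}\Bigr)^{d_n-1}\le x^{-(d_n-1)/\log n}.
\]
\begin{enumerate}[(a)]
\item On $[1,\log n]$ we have $\log x\ge0$, so the right-hand side is at most $1$. As in the proof of Lemma~\ref{le:np_r_2}, $(1-x/n)^{n-1-r}\le\e^{-x/2}$ for large $n$. Thus $g_n(x)\le x^r\e^{-x/2}$, which is integrable on $[1,\infty)$.
\item On $(0,1)$ I bound $(1-x/n)^{n-1-r}\le1$. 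Since $(d_n-1)/\log n\to\e$, for large $n$ the exponent satisfies $(d_n-1)/\log n\le\e+\eta$, and then $x^{-(d_n-1)/\log n}\le x^{-\e-\eta}$ because $x<1$. Hence $g_n(x)\le x^{r-\e-\eta}$ on $(0,1)$.
\end{enumerate}

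The delicate point is integrability at $0$ when $r=2$. There $r-\e\approx-0.718$, so $\eta$ must be chosen strictly below $r+1-\e$ (for example $\eta=0.1$) to keep the exponent above $-1$. With this choice, $x^{r-\e-\eta}\one\{0<x<1\}+x^r\e^{-x/2}\one\{x\ge1\}$ is an integrable majorant independent of $n$. Dominated convergence then yields $\int_0^{\log n}g_n\to\Gamma(r+1-\e)$.

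Beyond the choice of $\eta$, the only care needed is to confirm that the bound $1-t\le\e^{-t}$ applies in the regime $t<0$, where the factor $1-\log x/\log n$ exceeds $1$ and grows as $x\to0$. It does, so no separate treatment of small $x$ is required.
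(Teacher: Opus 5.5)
Your proof is correct, but it takes a different route from the paper's.

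The paper splits the range at $(\log n)^{-1}$:
\begin{itemize}
\item On $[0,(\log n)^{-1}]$ it uses the same bound $x^{-\e-\eps}$ as your step (b) to show that this piece vanishes.
\item On $[(\log n)^{-1},\log n]$ it proves that $(1-x/n)^{n-1-r}/\e^{-x}\to1$ and $(1-\log x/\log n)^{d_n-1}/x^{-\e}\to1$ uniformly. It does this by checking that $x_n^2/n\to0$ and $(\log x_n)^2/\log n\to0$ along arbitrary sequences in that window.
\item It then replaces the integrand by $x^{r-\e}\e^{-x}$ via a small lemma on integrals of uniformly asymptotically equivalent functions.
\end{itemize}

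You need only pointwise limits and a single $n$-independent integrable majorant on all of $(0,\log n]$. You get the majorant from $1-t\le\e^{-t}$ together with the bound $\e^{-x/2}$ from Lemma~\ref{le:np_r_2}. Raising $1-t\le\e^{-t}$ to the power $d_n-1$ is legitimate because $1-\log x/\log n>0$ for $x\le\log n$; it is worth saying this explicitly. This makes the argument shorter and avoids the uniformity estimates entirely. It is the same dominated-convergence device the paper itself uses later, in the proof of Theorem~\ref{thm:3}(iii).

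The paper's version gives a uniform relative approximation of the integrand on the bulk of the range, in the spirit of Laplace's method. That is the kind of control you would need to extract error terms or rates. For the bare limit statement, your route is the more economical one.
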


\begin{proof}
  First, note that for large $n$,
  $x\in\bigl(0,(\log n)^{-1}\bigr]$, and any $\eps>0$,
  \begin{displaymath}
    \Bigl(1-\frac{\log x}{\log n}\Bigr)^{d_n-1}\le\exp\Bigl(-(d_n-1)\frac{\log x}{\log n}\Bigr)
    \le x^{-\e-\eps}.
  \end{displaymath}
  Hence, the integral over $\bigl[0,(\log n)^{-1}\bigr]$ is bounded by
  $\int_0^{(\log n)^{-1}}x^{r-\e-\eps}\d x$,
  which vanishes for small $\eps$ as $n\to\infty$. Therefore, it suffices to consider the integral over $\bigl[(\log n)^{-1},\log n\bigr]$.
  
  Next, observe that
  \begin{equation}
  \begin{gathered}
    \label{eq:equiv}
    \sup_{(\log n)^{-1}\le x\le\log n}
    \biggl|\frac{\bigl(1-\frac xn\bigr)^{n-1-r}}{\e^{-x}}-1\biggr|\to 0,\\
    \sup_{(\log n)^{-1}\le x\le\log n}\Biggl|
    \frac{\bigl(1-\frac{\log x}
      {\log n}\bigr)^{d_n-1}}{x^{-\e}}-1\Biggr|\to 0
  \end{gathered}
  \end{equation}
  as $n\to\infty$. Indeed, for any sequence $(x_n)_{n\in\N}$ with $x_n\in\bigl[(\log n)^{-1},\log n\bigr]$, we have
  \begin{gather*}
  	\begin{multlined}
    (n-1-r)\log\Bigl(1-\frac{x_n}{n}\Bigr)+x_n\\
    =(n-1-r)\biggl(-\frac{x_n}{n}+\O\biggl(\frac{x_n^2}{n^2}\biggr)
    \biggr)+x_n
    =\O\Bigl(\frac{x_n}n\Bigr)+\O\biggl(\frac{x_n^2}n\biggr)\to0,
    \end{multlined}\\
	\begin{multlined}
    (d_n-1)\log\Bigl(1-\frac{\log x_n}{\log n}\Bigr)+\e\log x_n\\=
    (d_n-1)\Bigl(-\frac{\log x_n}{\log n}+\O\Bigl(\frac{\log x_n}{\log n}\Bigr)^{\!2\,}\Bigr)+\e\log x_n\to0.
    \end{multlined}
  \end{gather*}
  Since this holds for every such sequence, the claimed uniform convergence follows.
  
  It is easy to check that, if, for some positive functions $f_n$, $h_n$, $h$, and sets $A_n$,
  \[\sup_{x\in A_n}\biggl|\frac{h_n(x)}{h(x)}-1\biggr|\to0,\qquad n\to\infty,\]
  then
  \[\lim_{n\to\infty}\int_{A_n}f_n(x)h_n(x)\d x=\lim_{n\to\infty}
    \int_{A_n}f_n(x)h(x)\d x\]
  whenever the second limit exists.
  Applying this to the integral
  \[\int_{(\log n)^{-1}}^{\log n}
    x^r\Bigl(1-\frac xn\Bigr)^{n-1-r}
    \Bigl(1-\frac{\log x}{\log n}\Bigr)^{d_n-1}\d x\]
  and using \eqref{eq:equiv} together with
  \[\lim_{n\to\infty}\int_{(\log n)^{-1}}^{\log n}
    x^r\e^{-x}x^{-\e}\d x=\Gamma(r+1-\e),\]
  we obtain the claim.
\end{proof}

\begin{proof}[Proof of Theorem \ref{thm:3}]
  (ii) follows from \eqref{eq:np_r} and
  Lemmas~\ref{le:np_r_1}, \ref{le:np_r_2}, \ref{le:np_r_3}. For (i),
  it suffices to note that, by Remark \ref{re:np_r_1}(i), the first
  factor on the right-hand side of \eqref{eq:np_r} tends to infinity,
  whereas the integral is bounded away from zero as $n\to\infty$:
  \begin{align*}
    \int_{0}^{n}x^r
    \Big(1-\frac{x}{n}\Big)^{n-1-r}\Big(1-\frac{\log x}{\log
      n}\Big)^{d_n-1}\d x
    &\geq\int_{1}^{2}x^r
    \Big(1-\frac{x}{n}\Big)^{n-1-r}\Big(1-\frac{\log x}{\log
      n}\Big)^{d_n-1}\d x\\
    &\geq \Big(1-\frac{2}{n}\Big)^{n-1-r}\Big(1-\frac{\log 2}{\log
      n}\Big)^{d_n-1}\int_{1}^{2}x^r\d x,
  \end{align*}
  where each factor on the right-hand side is bounded away from zero.

  We now prove (iii). Since $2.9>\frac{2}{\log 2}$, it suffices to
  consider the case where $d_n\leq 2.9\log n$ for large $n$; indeed,
  for any subsequence $(n_k)$ such that $d_{n_k}>2.9\log n_k$,
  \begin{displaymath}
    \E K_{n_k,d_{n_k}}^{(r)}\leq \E (n_k-K_{n_k,d_{n_k}})\to 0
  \end{displaymath}
  by \eqref{eq:thm:1_iii}. Moreover, by \eqref{eq:np_r} and Remark
  \ref{re:np_r_1}(ii), the proof of (iii) reduces to showing that
  \begin{equation}
    \label{eq:np_iii_rest}
    \limsup_{n\to\infty}\int_{0}^{n}x^r
    \Big(1-\frac{x}{n}\Big)^{n-1-r}\Big(1-\frac{\log x}{\log
      n}\Big)^{d_n-1}\d x<\infty.
  \end{equation}
  The integral over $[1,n]$ is decreasing in $d_n$. As shown in (ii),
  \eqref{eq:np_iii_rest} holds for $d_n=\e\log n-\frac{1}{2}\log\log
  n+c+\smallO(1)$. Hence, provided that the integral is restricted to
  $[1,n]$, the same holds for larger values of $d_n$. In view of the
  above, it remains to prove that
  \begin{displaymath}
    \limsup_{n\to\infty}\int_{0}^{1}x^r
    \Big(1-\frac{x}{n}\Big)^{n-1-r}\Big(1-\frac{\log x}{\log
      n}\Big)^{d_n-1}\d x<\infty
  \end{displaymath}
  for $d_n\leq 2.9\log n$. By monotonicity, it suffices to take
  $d_n=2.9\log n$. The claim then follows by dominated convergence,
  since
  \begin{displaymath}
    x^r\Big(1-\frac{x}{n}\Big)^{n-1-r}\Big(1-\frac{\log x}{\log
      n}\Big)^{2.9\log n-1}\to\e^{-x}x^{r-2.9},\quad x\in(0,1],
  \end{displaymath}
  and, for $n\geq r+1$, the integrand is dominated on $(0,1]$ by
  $x^{r-2.9}$, which is integrable since $r\geq 2$.
\end{proof}

\section*{Acknowledgments}
  We are grateful to Ilya Molchanov and Chinmoy Bhattacharjee
  for bringing this problem to our attention, as well as for helpful
  and stimulating discussions.

\providecommand{\bysame}{\leavevmode\hbox to3em{\hrulefill}\thinspace}
\providecommand{\MR}{\relax\ifhmode\unskip\space\fi MR }
\providecommand{\MRhref}[2]{%
	\href{http://www.ams.org/mathscinet-getitem?mr=#1}{#2}}
\providecommand{\href}[2]{#2}

\end{document}